\documentclass[reqno,12pt,a4paper]{amsart}

\usepackage[utf8]{inputenc}

\usepackage{enumerate}
\usepackage{enumitem}
\usepackage{amstext,amsmath,amsthm,amsfonts,amssymb,amscd}
\usepackage{latexsym,mathrsfs,dsfont,euscript}
\usepackage{mathbbol}
\usepackage{mathtools}
\usepackage{xcolor}
\usepackage{multicol}
\usepackage{fullpage}

\usepackage{hyperref} 
\hypersetup{
	colorlinks,%
	citecolor=blue,%
	filecolor=red,%
	linkcolor=red,%
	urlcolor=blue
}

\usepackage{pgfplots}
\pgfplotsset{compat=1.15}
\usepackage{mathrsfs}
\usetikzlibrary{arrows}

\usepackage[font=footnotesize,labelfont=it]{caption}

\newtheorem{theorem}{Theorem}[section]
\newtheorem{proposition}[theorem]{Proposition}
\newtheorem{lemma}[theorem]{Lemma}

\theoremstyle{definition}

\theoremstyle{remark}

\numberwithin{equation}{section}

\allowdisplaybreaks

\begin{document}
	
\title[Boundedness properties of the bilinear fractional integral operators induced by hypermetrics of third order]{Boundedness properties of the bilinear fractional integral operators induced by hypermetrics of third order}


\author[]{Hugo Aimar}
\author[]{Ivana G\'{o}mez}
\author[]{Joaqu\'in Toledo}


\subjclass[2020]{Primary 42B20. Secondary 47H60, 47G10, 42B25}

\keywords{Multilinear fractional integrals,	hypermetrics, Hardy-Littlewood-Sobolev inequalities}
%
%
\begin{abstract}
We introduce a natural bilinear fractional integral type operator induced by a third order hypermetric on Ahlfors regular quasi-metric spaces. Given a quasi-metric space $(X,d)$ the function $\rho(x,y,z)$, defined as the distance, in $X^3$, of $(x,y,z)$ to the diagonal $\bigtriangleup_3=\{(x,x,x)\in X^3:x\in X\}$ is said to be a third order hypermetric in $X$. When $(X,d)$ is a Euclidean space or, more generally, when $(X,d,\mu)$ is $\eta$-Ahlfors regular for some $\eta$ positive, the function $\rho(x,y,z)$ generates kernels for bilinear operators of the type $T^{\gamma}(f,g)(x)=\iint_{X\times X}\rho(x,y,z)^{-\gamma}f(y)g(z)d\mu(y)d\mu(z)$, for a given positive $\gamma$. In the setting of $\eta$-Ahlfors regular space, the power $-\gamma=-2\eta$ of $\rho(x,\cdot,\cdot)$ provides the natural singularity for this family of kernels. In this paper we consider the fractional integral rank $0<\gamma<2\eta$. We prove boundedness properties of the type $\|T^{\gamma}(f,g)\|_{p_3}\leq C\|f\|_{p_1}\|g\|_{p_2}$ for  adequate values of the exponents $p_1,p_2$ and $p_3$. The proof is based on three upper bounds for $T^{\gamma}(f,g)$ in terms of the classical linear fractional Riesz operators $I_{\eta-\frac{\gamma}{2}}$, using the linear Hardy-Littlewood-Sobolev inequality.
\end{abstract}

\maketitle

\section{Introduction}\label{sec:introduction}
The classical linear fractional integral operator $I_{\alpha}$, in particular its boundedness pro\-perties on Lebesgue space, has been extended to the multilinear setting. See for example \cite{Gra92}, \cite{KeSte99}, \cite{GrafakosBook2014}. On the other hand the boundedness of the linear Riesz operator $I_{\alpha}$ in Lebesgue spaces, that in the Euclidean setting can be found for example in \cite{Stein1970book}, was extended to metric measure spaces in \cite{GCGa04}. The main results in all these papers provide extensions of the Hardy-Littlewood-Sobolev inequality.

In \cite{AiGoTo2026MultiApproxIdent} we introduced the notion of hypermetric of order $k$ on a quasi-metric space. The basic idea is provided by the fact that given two different points $x$ and $y$ in the metric space $(X,d)$ we have that $d(x,y)$ is equivalent to $\rho(x,y)$ given as the distance of the pair $(x,y)\in X\times X$ to the diagonal $\bigtriangleup_2$ of $X\times X$ with respect to the metric $d^{(2)}\left((x,y); (x',y')\right)=\max\{d(x,x'),d(y,y')\}$. In fact, from the triangle inequality for $d$ we easily see that $\frac{1}{2}d(x,y)\leq \rho(x,y)\leq d(x,y)$. When instead of a metric space we have a quasi-metric space $(X,d)$ with triangular constant $\kappa\geq 1$, we have that $\frac{1}{2\kappa}d(x,y)\leq \rho(x,y)\leq d(x,y)$ for every $x, y \in X$. For a given quasi-metric space $(X,d)$, we consider the product space $(X^{3},d^{(3)})$ with $X^3=X\times X\times X$ and $d^{(3)}(\boldsymbol{x},\boldsymbol{y})=\sup\{d(x_i,y_i), i=0, 1,2\}$, where $\boldsymbol{x}=(x_0,x_1, x_2)$ and $\boldsymbol{y}=(y_0,y_1, y_2)$. Set $\bigtriangleup_{3}=\{(x, x, x)\in X^{3}: x\in X\}$ to denote the diagonal of $X^{3}$. The \textbf{third order hypermetric} induced by $d$ in $X$ is defined by $\rho(\boldsymbol{x})=\rho(x_0,x_1, x_2)=d^{(3)}(\boldsymbol{x},\bigtriangleup_{3})$. We shall use $(x,y,z)$ to denote the points in $X^3$. Hence $\rho(x,y,z)=d^{(3)}\left((x,y,z),\bigtriangleup_3\right)=\inf_{u\in X}d^{(3)}\left((x,y,z), (u,u,u)\right)$.The function $\rho$ defined in $X^3$ generates a wide family of kernels for bilinear operators of the type $\varphi\left(\rho(x,y,z)\right)$.

Recall that for $\eta>0$ given, a space of homogeneous type $(X,d,\mu)$ is said to be $\eta$-Ahlfors regular, if there exist two constants $0<a\leq A<\infty$ such that the inequalities $ar^{\eta}\leq \mu\left(B_{d}(x,r)\right)\leq Ar^{\eta}$ hold for every $x\in X$ and every $r>0$. Here $B_{d}(x,r)=\{y\in X: d(x,y)<r\}$ is the $d$-ball centered at $x$ with radius $r>0$. The bilinear integral operators induced by $\rho$ and $\varphi$ have the general form
\begin{align*}
	T_{\varphi}(f,g)(x)=\iint_{X\times X}\varphi(\rho(x,y,z))f(y)g(z)d\mu(y)d\mu(z)
\end{align*}
under adequate conditions on $\varphi$, $f$ and $g$ that guarantee the existence of the integral.

The next result provides a necessary and sufficient condition on a nonincreasing, nonnegative function $\varphi(t)$ of the positive variable $t$ in order to have the convergence of the integral $\iint_{X\times X}\varphi(\rho(x,y,z))d\mu(y)d\mu(z)$ on an $\eta$-Ahlfors regular space.

\begin{lemma}\label{C1varphi<I<C2varphi}
Let $(X,d, \mu)$ be an $\eta$-Ahlfors regular space. Let $\varphi:\mathbb{R}_{>0} \to \mathbb{R}_{\geq 0}$ be a nonincreasing function. Then, there exist two constants $0<C_1\leq C_2<\infty$ depending only on the geometric constants $\kappa$, $a$ and $A$ such that the inequalities 
\begin{align*}
	C_1\int_0^\infty\varphi(t)t^{2\eta-1}dt\leq \iint_{X\times X}\varphi(\rho(x,y,z))d\mu(y)d\mu(z)\leq C_2\int_0^\infty\varphi(t)t^{2\eta-1}dt
\end{align*}
hold for every $x \in X$.
\end{lemma}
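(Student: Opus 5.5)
The plan is to first replace $\rho(x,y,z)$ by the more tractable quantity $m(x,y,z)=\max\{d(x,y),d(x,z)\}$, and then compute the resulting double integral with the layer cake formula together with Ahlfors regularity.

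\textbf{Step 1: comparing $\rho$ with $m$.} Taking $u=x$ in the infimum gives $\rho(x,y,z)\leq m(x,y,z)$. Conversely, for any $u\in X$ we have $d(x,y)\leq \kappa(d(x,u)+d(u,y))\leq 2\kappa\, d^{(3)}((x,y,z),(u,u,u))$, and similarly for $d(x,z)$. Hence
$$\frac{1}{2\kappa}\, m(x,y,z)\leq \rho(x,y,z)\leq m(x,y,z).$$
Since $\varphi$ is nonincreasing, this yields
$$\varphi(m)\leq \varphi(\rho)\leq \varphi\bigl(m/(2\kappa)\bigr).$$

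\textbf{Step 2: the model estimate.} It then suffices to prove, for every nonincreasing $\psi\geq 0$ and every $x$, that
$$\iint \psi(m(x,y,z))\,d\mu(y)\,d\mu(z)\simeq \int_0^\infty \psi(t)\,t^{2\eta-1}\,dt,$$
with constants depending only on $a$, $A$, $\kappa$. Applying this to $\psi(t)=\varphi(t/(2\kappa))$ and substituting $t=2\kappa s$ only introduces the factor $(2\kappa)^{2\eta}$. To prove the model estimate, set $\nu=\mu\otimes\mu$ and note that
$$\nu\{(y,z):m(x,y,z)<r\}=\mu(B(x,r))^2\in[a^2r^{2\eta},A^2r^{2\eta}].$$
So $F(r)=\nu\{m<r\}$ is comparable to $r^{2\eta}$, and the integral equals the Lebesgue--Stieltjes integral $\int_{[0,\infty)}\psi(r)\,dF(r)$. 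Here $\nu\{m=0\}=\mu(\{x\})^2=0$ by Ahlfors regularity, so the value of $\psi$ at $0$ does not enter.

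\textbf{Step 3: the main obstacle.} The difficulty is passing from comparability of the distribution functions $F(r)\simeq r^{2\eta}$ to comparability of the integrals. This works because $\psi$ is monotone. I would first approximate $\psi$ from below by monotone limits of nonincreasing step functions, using monotone convergence, so that it suffices to treat $\psi=\sum_j c_j\chi_{(0,r_j)}$ with $c_j\geq0$; the half-open versus closed endpoint issue costs nothing up to taking limits. For such a step function,
$$\iint\psi(m)\,d\nu=\sum_j c_j F(r_j)\simeq \sum_j c_j r_j^{2\eta}=2\eta\int_0^\infty\psi(t)\,t^{2\eta-1}\,dt.$$
Equivalently, one can integrate by parts, writing $\psi(r)=\int \chi_{\{\psi>s\}}(r)\,ds$, where each level set $\{\psi>s\}$ is an interval $(0,r_s)$ or $(0,r_s]$, and then apply Tonelli. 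This gives $C_1=a^2 2\eta$ and $C_2=A^2 2\eta(2\kappa)^{2\eta}$.

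The closed-interval case needs one extra bound, $\mu(\overline{B})\leq \mu(B(x,r+\epsilon))\leq A(r+\epsilon)^{2\eta/2}$, followed by letting $\epsilon\to0$. Both sides of the estimate may be infinite, and the argument handles that case as well.
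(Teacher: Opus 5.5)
Your proof is correct, and it takes a genuinely different route from the paper.

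\textbf{The paper's route.} The paper discretizes. It splits $X\times X$ into the shells $\lambda^{j}\le\rho(x,y,z)<\lambda^{j+1}$, replaces $\varphi$ on each shell by its endpoint values, and compares the resulting series with $\int_0^\infty\varphi(t)t^{2\eta-1}\,dt$. The inclusions of Lemma~\ref{lemadeinclusiones} give the upper bound at once; they are exactly your inequalities $m/(2\kappa)\le\rho\le m$ read on sublevel sets. The lower bound, however, needs a lower estimate for the measure of the shells $E(x,\lambda^{j+1})\setminus E(x,\lambda^{j})$. This forces a large base $\lambda$ depending on $a$, $A$, $\kappa$, $\eta$, and yields constants involving $\lambda^{4\eta}$ and $\log\lambda$.

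\textbf{What your route buys.} Your layer-cake/Tonelli argument only uses two-sided bounds for the measures of the sublevel sets themselves, never of their differences. So no discretization or choice of $\lambda$ is needed, and you get the explicit constants $C_1=2\eta a^2$ and $C_2=2\eta A^2(2\kappa)^{2\eta}$. You could even skip the passage to $m$ and run the same argument on $\rho$ using Lemma~\ref{lemadeinclusiones} directly.

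\textbf{The endpoint issue.} The price of your route is that you must control both open and closed sublevel sets. Your remark that the open/closed endpoint issue ``costs nothing up to taking limits'' is too quick. The distribution of $m$ under $\mu\otimes\mu$ can have atoms: spheres of positive measure occur in Ahlfors regular ultrametric spaces such as the $p$-adic numbers with Haar measure. So approximating $\chi_{(0,r]}$ from below by $\chi_{(0,r_n)}$ with $r_n\uparrow r$ does not recover $\iint\chi_{(0,r]}(m)\,d\nu$, and the upper bound would not follow from that approximation alone. What actually settles it is the closed-ball estimate $a r^{\eta}\le\mu(\{y:d(x,y)\le r\})\le A r^{\eta}$, which you do state at the end. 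With that estimate, the Tonelli version of your argument is complete.
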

We shall prove the above result in Section~\ref{sec: Proof of Lemma 1.1 }.
Taking, $\varphi(t)=t^{-\alpha}\chi_{(0,1)}(t)$, $\alpha>0$, we see that $ \iint_{\{(y,z):\rho(x,y,z)<1\}}\rho^{-\alpha}(x,y,z)d\mu(y)d\mu(z)$ is finite if and only if $0<\alpha<2\eta$. On the other hand, taking $\varphi(t)=\chi_{(0,1)}(t)+t^{-\beta}\chi_{[1,\infty)}(t)$ in Lemma~\ref{C1varphi<I<C2varphi}, we also see that  $ \iint_{\{(y,z):\rho(x,y,z)\geq1\}}\rho^{-\beta}(x,y,z)d\mu(y)d\mu(z)$ is finite if and only if $\beta>2\eta$. Hence the power $2\eta$ of $\frac{1}{\rho(x,y,z)}$, for $x \in X$, determines the singularity of this family of kernels. Thus the natural fractional integral bilinear operator induced by the hypermetric $\rho$ is given by 
	\begin{align*}
	T^{\gamma}(f,g)(x)=\iint_{X\times X}\frac{f(y)g(z)}{\rho^{\gamma}(x,y,z)}d\mu(y)d\mu(z)
\end{align*}
with $0<\gamma<2\eta$.

The main result of this paper, that we shall prove in Section~\ref{sec: Proof of Theo 1.2}, concerning the bounded\-ness in Lebesgue spaces of $T^{\gamma}$ is the following. 
\begin{theorem}\label{|T|p3leqC|f|p1|g|p2}
Let $(X,d, \mu)$ be an $\eta$-Ahlfors regular space with $\eta>0$, and $\rho$ as before. Let $0<\gamma<2\eta$. Then, for every $p_1>1$, $p_2>1$ and $p_3$ such that $0<\frac{1}{p_3}=\frac{1}{p_1} + \frac{1}{p_2}-\frac{2\eta-\gamma}{\eta}$, there exists $C>0$ such that the inequality 
\begin{align*}
	\|T^{\gamma}(f,g)\|_{p_3}\leq C \|f\|_{p_1} \|g\|_{p_2}
\end{align*}
holds for every couple $f$, $g$ of measurable nonnegative functions.
\end{theorem}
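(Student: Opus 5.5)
The plan is to dominate the bilinear kernel pointwise by products of linear Riesz kernels, and then combine Hölder's inequality with the linear Hardy--Littlewood--Sobolev inequality on Ahlfors regular spaces (\cite{GCGa04}). Set $\alpha=\eta-\frac{\gamma}{2}\in(0,\eta)$ and $I_\alpha h(x)=\int_X h(y)\,d(x,y)^{\alpha-\eta}\,d\mu(y)$. Then $d^{-\gamma/2}=d^{\alpha-\eta}$ and $\frac{2\eta-\gamma}{\eta}=\frac{2\alpha}{\eta}$, so the hypothesis reads
\begin{align*}
0<\frac1{p_3}=\frac1{p_1}+\frac1{p_2}-\frac{2\alpha}{\eta}.
\end{align*}
We will use HLS in the form: $\|I_\alpha h\|_q\le C\|h\|_s$ whenever $1<s<\infty$ and $\frac1q=\frac1s-\frac\alpha\eta>0$.

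\emph{Step 1 (geometry).} For any $u\in X$, the quasi-triangle inequality gives $d(x,y)\le\kappa(d(x,u)+d(u,y))\le 2\kappa\, d^{(3)}((x,y,z),(u,u,u))$, and the same holds for the pairs $(x,z)$ and $(y,z)$. Taking the infimum over $u$,
\begin{align*}
\rho(x,y,z)\ge \tfrac{1}{2\kappa}\max\{d(x,y),d(x,z),d(y,z)\}.
\end{align*}
Writing $\rho^{-\gamma}=\rho^{-\gamma/2}\rho^{-\gamma/2}$ and bounding each factor by a different pairwise distance gives three kernel bounds. Integrating them against $f(y)g(z)\ge0$ yields three pointwise estimates:
\begin{align*}
T^\gamma(f,g)(x)&\le C\, I_\alpha f(x)\, I_\alpha g(x),\\
T^\gamma(f,g)(x)&\le C\, I_\alpha\big(f\, I_\alpha g\big)(x),\\
T^\gamma(f,g)(x)&\le C\, I_\alpha\big(g\, I_\alpha f\big)(x).
\end{align*}
The first uses $d(x,y)$ and $d(x,z)$. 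The second uses $d(x,y)$ and $d(y,z)$, integrating first in $z$. The third is the symmetric version with the roles of $y$ and $z$ exchanged.

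\emph{Step 2 (case analysis).} This is the main point: no single bound covers every admissible triple of exponents, so I would split according to the sizes of $\frac1{p_1},\frac1{p_2}$ relative to $\frac\alpha\eta$.

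\emph{Case A:} $\frac1{p_1}>\frac\alpha\eta$ and $\frac1{p_2}>\frac\alpha\eta$. Use the first bound. Hölder with exponents $q_i$ given by $\frac1{q_i}=\frac1{p_i}-\frac\alpha\eta>0$ satisfies $\frac1{q_1}+\frac1{q_2}=\frac1{p_3}$, and HLS applied to each factor finishes this case.

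\emph{Case B:} $\frac1{p_2}\le\frac\alpha\eta$. Since $\frac1{p_1}+\frac1{p_2}>\frac{2\alpha}\eta$, this forces $\frac1{p_1}>\frac\alpha\eta$, so HLS gives $I_\alpha f\in L^{q_1}$ with $\frac1{q_1}=\frac1{p_1}-\frac\alpha\eta$. By Hölder, $g\,I_\alpha f\in L^s$ with
\begin{align*}
\frac1s=\frac1{p_2}+\frac1{p_1}-\frac\alpha\eta.
\end{align*}
On one hand $\frac1s<\frac\alpha\eta+1-\frac\alpha\eta=1$, using $p_1>1$, so $s>1$. On the other hand $\frac1s-\frac\alpha\eta=\frac1{p_3}>0$. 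Hence the third bound and a second application of HLS give $\|T^\gamma(f,g)\|_{p_3}\le C\|g\,I_\alpha f\|_s\le C\|f\|_{p_1}\|g\|_{p_2}$.

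\emph{Case C:} $\frac1{p_1}\le\frac\alpha\eta$. This is symmetric to Case B, using the second bound.

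Cases A, B and C exhaust all admissible exponents. The only delicate points are that the borderline equalities $\frac1{p_i}=\frac\alpha\eta$ fall into Cases B and C, where the condition $s>1$ still holds because $p_1>1$ (respectively $p_2>1$). I also need HLS on the Ahlfors regular space with constants depending only on $\kappa,a,A,\eta,\alpha$ and the exponents. I would take this from \cite{GCGa04}, or rederive it by the Hedberg splitting, using Ahlfors regularity to bound $\int_{d(x,y)<r}d(x,y)^{\alpha-\eta}\,d\mu(y)\lesssim r^\alpha$ together with the corresponding tail estimate.
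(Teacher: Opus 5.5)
Your proposal is correct and follows essentially the same route as the paper. It uses the same three pointwise bounds obtained from $\rho\ge\frac{1}{2\kappa}\max\{d(x,y),d(x,z),d(y,z)\}$, and the same Hölder plus double Hardy--Littlewood--Sobolev argument on the three exponent regions with $\sigma=\alpha/\eta$. The only cosmetic difference is that your Cases B and C form a disjoint partition (one coordinate $\le\sigma$), whereas the paper uses the larger overlapping sets $B_\sigma$, $C_\sigma$ cut off by $r+s<1+\sigma$; its covering lemma is proved by exactly your split.
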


\section{Proof of Lemma \ref{C1varphi<I<C2varphi}}\label{sec: Proof of Lemma 1.1 }
The results of this section are contained in \cite{AiGoTo2026MultiApproxIdent}, we include them for the sake of completeness.
The proof of Lemma~\ref{C1varphi<I<C2varphi} is based on the following control of the sections at $x \in X$, $E(x,r)=\{(y,z)\in X\times X:\rho(x,y,z)<r \}$, of the neighborhood of $\bigtriangleup_3$ given by $\{(x,y,z): \rho(x,y,z)<r\}$, $r>0$. With $\mu^2$ we shall denote the product measure $\mu \times \mu$ on $X\times X$.

\begin{lemma}\label{lemadeinclusiones}
Let $(X,d,\mu)$ be an $\eta$-Ahlfors regular space with geometric constants $\kappa$, $a$ and $A$. Let $\rho$ be the hypermetric of third order induced by $d$. Then, for every $x\in X$ and every $r>0$, we have,
\begin{enumerate}[label=\textit{(\thetheorem.\alph*)}, leftmargin=*, align=left, font=\color{black}]
\item \label{incluBenE}
$B_d(x,r)\times B_d(x,r)\subset E(x,r) \subset	B_d(x,2\kappa r)\times B_d(x,2\kappa r)$, and
\item\label{AhlforsconE}
$a^{2}r^{2\eta}\leq \mu^2\left(E(x,r)\right)\leq (2\kappa)^{2\eta}A^{2}r^{2\eta}$.
\end{enumerate}
\end{lemma}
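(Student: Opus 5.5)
The plan is to prove \ref{incluBenE} directly from the definition of $\rho$ as an infimum over the diagonal, using only the quasi-triangle inequality with constant $\kappa$. Then \ref{AhlforsconE} follows by monotonicity of $\mu^2$ and the Ahlfors regularity of $\mu$, since $\mu^2$ of a product of balls is the product of their measures.

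For the first inclusion in \ref{incluBenE}, fix $(y,z)\in B_d(x,r)\times B_d(x,r)$ and test the infimum defining $\rho$ with $u=x$. This gives
\begin{align*}
\rho(x,y,z)\leq d^{(3)}\left((x,y,z),(x,x,x)\right)=\max\{0,d(y,x),d(z,x)\}<r,
\end{align*}
so $(y,z)\in E(x,r)$. For the second inclusion, take $(y,z)\in E(x,r)$. Since $\rho(x,y,z)<r$ is an infimum, there is some $u\in X$ with $\max\{d(x,u),d(y,u),d(z,u)\}<r$. By symmetry of $d$ and the quasi-triangle inequality,
\begin{align*}
d(x,y)\leq \kappa\left(d(x,u)+d(u,y)\right)<2\kappa r,
\end{align*}
and likewise $d(x,z)<2\kappa r$. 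Hence $(y,z)\in B_d(x,2\kappa r)\times B_d(x,2\kappa r)$.

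For \ref{AhlforsconE}, apply $\mu^2$ to the chain of inclusions. The lower bound is $\mu^2(E(x,r))\geq \mu(B_d(x,r))^2\geq a^2r^{2\eta}$. The upper bound is $\mu^2(E(x,r))\leq \mu(B_d(x,2\kappa r))^2\leq A^2(2\kappa r)^{2\eta}$.

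The only point needing care, which I expect to be the main (mild) obstacle, is that $E(x,r)$ is $\mu^2$-measurable. The existence of $u$ above shows that
\begin{align*}
E(x,r)=\bigcup_{u\in B_d(x,r)}B_d(u,r)\times B_d(u,r).
\end{align*}
Equivalently, $E(x,r)$ is the section at $x$ of $\{\rho<r\}$, and $\rho$ is the distance to a set in $X^3$. If $d$ is not continuous, I would handle this in one of two ways. The first is to replace $d$ by an equivalent metric power $d^{\varepsilon}$ (Macías–Segovia), which changes $\rho$ only up to constants. The second is to interpret $\mu^2$ as outer measure, for which the monotonicity used above still holds.
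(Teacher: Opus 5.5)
Your proof is correct and is essentially the paper's argument. You test the infimum with $u=x$ for the first inclusion, use a point $u$ with $\max\{d(x,u),d(y,u),d(z,u)\}<r$ together with the quasi-triangle inequality for the second, and then apply Ahlfors regularity to the product of balls. The paper takes the measurability of $E(x,r)$ for granted. Of your two fixes, the outer-measure reading is the one that keeps the statement and its constants exactly as written, since replacing $d$ by an equivalent quasi-metric changes the set $E(x,r)$ itself.
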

\begin{proof}
	Notice that \ref{AhlforsconE} follows from \ref{incluBenE} and the $\eta$-Ahlfors character of $(X,d,\mu)$. The first inclusion in \ref{incluBenE} follows from the fact that if $(y,z)\in B_d(x,r)\times B_d(x,r)$, then $\rho(x,y,z)\leq \max\{d(x,x), d(x,y), d(x,z)\}<r$. Hence $(y,z) \in E(x,r)$. To prove the second inclusion in \ref{incluBenE}, take now  $(y,z)\in E(x,r)$. Then $\rho(x,y,z)<r$. So that, there exists $u\in X$ such that, $ \max\{d(x,u), d(y,u), d(z,u)\}<r$. Hence $d(y,x)\leq \kappa(d(y,u)+d(u,x))<2\kappa r$ and $d(z,x)\leq \kappa(d(z,u)+d(u,x))<2\kappa r$, and $(y,z)\in B_d(x,2\kappa r)\times B_d(x,2\kappa r)$.
\end{proof}

\begin{proof}[Proof of Lemma~\ref{lemadeinclusiones}]
Set
\begin{align*}
J(x):=	\iint_{X\times X}\varphi(\rho(x,y,z))d\mu(y)d\mu(z).
\end{align*}
Take $x\in X$ and $r>0$, then, since $\mu^2(\{(x,x)\})=0$, for $\lambda>1$
\begin{align*}
	J(x)=\sum_{j\in \mathbb{Z}}\iint_{ \lambda^{j}\leq \rho(x,y,z)< \lambda^{j+1}}\varphi(\rho(x,y,z))d\mu(y)d\mu(z). 
\end{align*}
Hence from the nonincreasing condition on $\varphi$ and \ref{incluBenE} in Lemma~\ref{lemadeinclusiones},
\begin{align}
	J(x) &\leq \sum_{j\in \mathbb{Z}}\varphi\left(\lambda^{j}\right)\iint_{ \lambda^{j}\leq \rho(x,y,z)< \lambda^{j+1}}d\mu(y)d\mu(z) \notag\\ \notag
	&\leq \sum_{j\in \mathbb{Z}}\varphi\left(\lambda^{j}\right) \mu^{2}\left(E(x, \lambda^{j+1})\right)\\ \notag
	&\leq A^{2}(2\kappa)^{2\eta }\sum_{j\in \mathbb{Z}}\varphi\left(\lambda^{j}\right)\lambda^{2\eta(j+1)}\\ \notag
	&=\frac{\lambda^{2\eta}A^2 (2\kappa)^{2\eta }}{\log \lambda}\sum_{j\in \mathbb{Z}}\int_{\lambda^{j-1}}^{\lambda^{j}}\varphi\left(\lambda^{j}\right) \lambda^{2\eta j}\frac{dt}{t}\\ \notag
	&\leq \frac{\lambda^{4\eta}A^2 (2\kappa)^{2\eta }}{\log \lambda}\sum_{j\in \mathbb{Z}}\int_{\lambda^{j-1}}^{\lambda^{j}}\varphi\left(t\right) t^{2\eta-1}dt\\ 
	&=\left(\frac{\lambda^{4\eta}}{\log \lambda}\right)A^2 (2\kappa)^{2\eta} \int_0^\infty\varphi(t)t^{2\eta-1}dt.\label{eq:upperboundJ}
\end{align}
Let us consider now the lower bound for $J(x)$.		
From Lemma~\ref{lemadeinclusiones} we have that for $\lambda>2\kappa$,
\begin{equation*}
	E(x,\lambda^{j+1})\setminus E(x,\lambda^{j})\supset  [B_d(x,\lambda^{j+1})\times B_d(x,\lambda^{j+1})]\setminus [B_d(x,2\kappa \lambda^{j}) \times B_d(x,2\kappa \lambda^{j})].
\end{equation*}
Now, 
\begin{align*}
	J(x)&\geq \sum_{j\in \mathbb{Z}}\varphi\left(\lambda^{j+1}\right)\mu^2\left(E(x,\lambda^{j+1})\setminus E(x,\lambda^{j}) \right)\\
	& \geq \sum_{j\in \mathbb{Z}}\varphi\left(\lambda^{j+1}\right)\mu^2\left([B_d(x,\lambda^{j+1})\times B_d(x,\lambda^{j+1})]\setminus  [B_d(x,2\kappa \lambda^{j})\times B_d(x,2\kappa \lambda^{j})]\right).
\end{align*}
From the Ahlfors character of the space,
\begin{align*}
	\mu^2 &\left([B_d(x,\lambda^{j+1})\times B_d(x,\lambda^{j+1})]\setminus  [B_d(x,2\kappa \lambda^{j})\times B_d(x,2\kappa \lambda^{j})]\right)\\
	&\phantom{ B_d(x,\lambda^{j+1})} =\mu^2\left( B_d(x,\lambda^{j+1}) \times B_d(x,\lambda^{j+1})\right)- \mu^2\left( B_d(x,2\kappa \lambda^{j}) \times B_d(x,2\kappa \lambda^{j})\right)\\
	&\phantom{ B_d(x,\lambda^{j+1})} \geq a^2 (\lambda^{j+1})^{2\eta}-A^2(2\kappa \lambda^{j})^{2\eta}\\
	&\phantom{ B_d(x,\lambda^{j+1})} =\lambda^{2\eta j}\left[a^2\lambda^{2\eta}-A^2 2^{2\eta}\kappa^{2\eta}\right].
\end{align*}
Hence, if we choose $\lambda=\dfrac{(1+A^{2}(2\kappa)^{2\eta})^{\tfrac{1}{2\eta}}}{a^{\tfrac{1}{\eta}}}$, we get
\begin{align*}
	J(x)&\geq \sum_{j\in \mathbb{Z}}\varphi(\lambda^{j+1})\lambda^{2\eta j}\\
	&\geq \frac{1}{(\lambda^{2\eta})^2}\frac{1}{\log \lambda}\sum_{j\in \mathbb{Z}}\int_{\lambda^{j+1}}^{\lambda^{j+2}}\varphi(t)t^{2\eta} \frac{dt}{t}\\
	&\geq \frac{1}{\lambda^{4\eta}\log \lambda} \int_0^\infty\varphi(t)t^{2\eta-1}dt. 
\end{align*}
This inequality together with \eqref{eq:upperboundJ}, gives the result.
\end{proof}
	
\section{Proof of Theorem \ref{|T|p3leqC|f|p1|g|p2}}\label{sec: Proof of Theo 1.2}
The next elementary geometric observation will be useful at describing the type regions for the operator $T^{\gamma}$.

\begin{lemma}\label{Omega=AuBuC}
Let $0<\sigma<1$ be given and
\begin{equation*}
	\Omega_\sigma = \{(r,s)\in (0,1)^2: r+s>2\sigma\}.
\end{equation*}
Set $A_\sigma=(\sigma,1)^2$, $B_\sigma=\{(r,s)\in (0,1)^2:s>\sigma \text{ and } 2\sigma<r+s<1+ \sigma\}$ and $C_\sigma=\{(r,s)\in (0,1)^2:r>\sigma \text{ and } 2\sigma<r+s<1+ \sigma\}$ (see Figure~\ref{fig1}).
Then 
\begin{equation*}
	\Omega_\sigma = A_\sigma\cup B_\sigma \cup C_\sigma.
\end{equation*}
\end{lemma}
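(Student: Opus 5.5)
We prove the set equality by double inclusion. The inclusion $A_\sigma\cup B_\sigma\cup C_\sigma\subset\Omega_\sigma$ is immediate. Each of the three sets is contained in $(0,1)^2$ by definition. Points of $B_\sigma$ and $C_\sigma$ satisfy $r+s>2\sigma$ explicitly. For $(r,s)\in A_\sigma=(\sigma,1)^2$ we have $r>\sigma$ and $s>\sigma$, hence $r+s>2\sigma$.

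For the reverse inclusion, take $(r,s)\in\Omega_\sigma$ and split according to the value of $r+s$, which lies in $(2\sigma,2)$.

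\emph{Case 1: $r+s\geq 1+\sigma$.} Since $s<1$, we get $r\geq 1+\sigma-s>\sigma$. Symmetrically, $r<1$ gives $s>\sigma$. Hence $(r,s)\in(\sigma,1)^2=A_\sigma$.

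\emph{Case 2: $2\sigma<r+s<1+\sigma$.} At least one of $r>\sigma$ or $s>\sigma$ must hold, since otherwise $r+s\leq 2\sigma$, a contradiction. If $s>\sigma$, then $(r,s)\in B_\sigma$. If $r>\sigma$, then $(r,s)\in C_\sigma$.

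Together the two cases give $\Omega_\sigma\subset A_\sigma\cup B_\sigma\cup C_\sigma$, which proves the lemma. There is no real obstacle here. The only point that needs care is the boundary line $r+s=1+\sigma$, which is excluded from $B_\sigma$ and $C_\sigma$. It is covered by $A_\sigma$ because the inequalities $r,s<1$ are strict, and this is why Case 1 uses the non-strict inequality $r+s\geq 1+\sigma$.
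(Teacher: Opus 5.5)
Your proof is correct, but your case split differs from the paper's.

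The paper argues through the complement of $A_\sigma$. A point of $\Omega_\sigma$ outside $(\sigma,1)^2$ has $r\le\sigma$ or $s\le\sigma$. If $r\le\sigma$, then $r+s>2\sigma$ forces $s>\sigma$, and $r+s\le\sigma+s<1+\sigma$ holds automatically because $s<1$. So the point lies in $B_\sigma$, and symmetrically in $C_\sigma$ when $s\le\sigma$.

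You instead cut $\Omega_\sigma$ along the line $r+s=1+\sigma$. You show that the triangle $\{(r,s)\in(0,1)^2: r+s\ge 1+\sigma\}$ sits inside $A_\sigma$, and that the strip $2\sigma<r+s<1+\sigma$ is covered by $B_\sigma\cup C_\sigma$.

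The paper's route is slightly more economical. The upper constraint $r+s<1+\sigma$ never needs its own case, and the boundary line is absorbed into $A_\sigma$ without comment. Your route makes the geometry explicit: $B_\sigma\cup C_\sigma$ is exactly the part of $\Omega_\sigma$ below that line. Hence the triangle is precisely $\Omega_\sigma\setminus(B_\sigma\cup C_\sigma)$, the exponent range where the proof of the theorem must rely on the product bound. Either argument is complete.
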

\begin{proof}
Notice first that $A_\sigma\cup B_\sigma\cup C_\sigma \subset \Omega_\sigma$. Let us now prove that $\Omega_\sigma$ is covered by $A_\sigma\cup B_\sigma\cup C_\sigma$. In fact, if $(r,s) \notin (\sigma,1)^2$ but $(r,s)\in (0,1)^2$ and $r+s>2\sigma$, then $0<r\leq \sigma$ or $0<s\leq \sigma$.
Assume that $0<r\leq \sigma$. Hence since $r+s>2\sigma$, we necessarily have that $s>\sigma$. Also $r+s\leq\sigma+s<\sigma+1$. Hence $(r,s)\in B_\sigma$. In case $0<s\leq \sigma$, we see in the same way that $(r,s)\in C_\sigma$.
\end{proof}

\begin{figure}[hb]
\begin{multicols}{4}
	\begin{center}
		\includegraphics[width=0.36\textwidth]{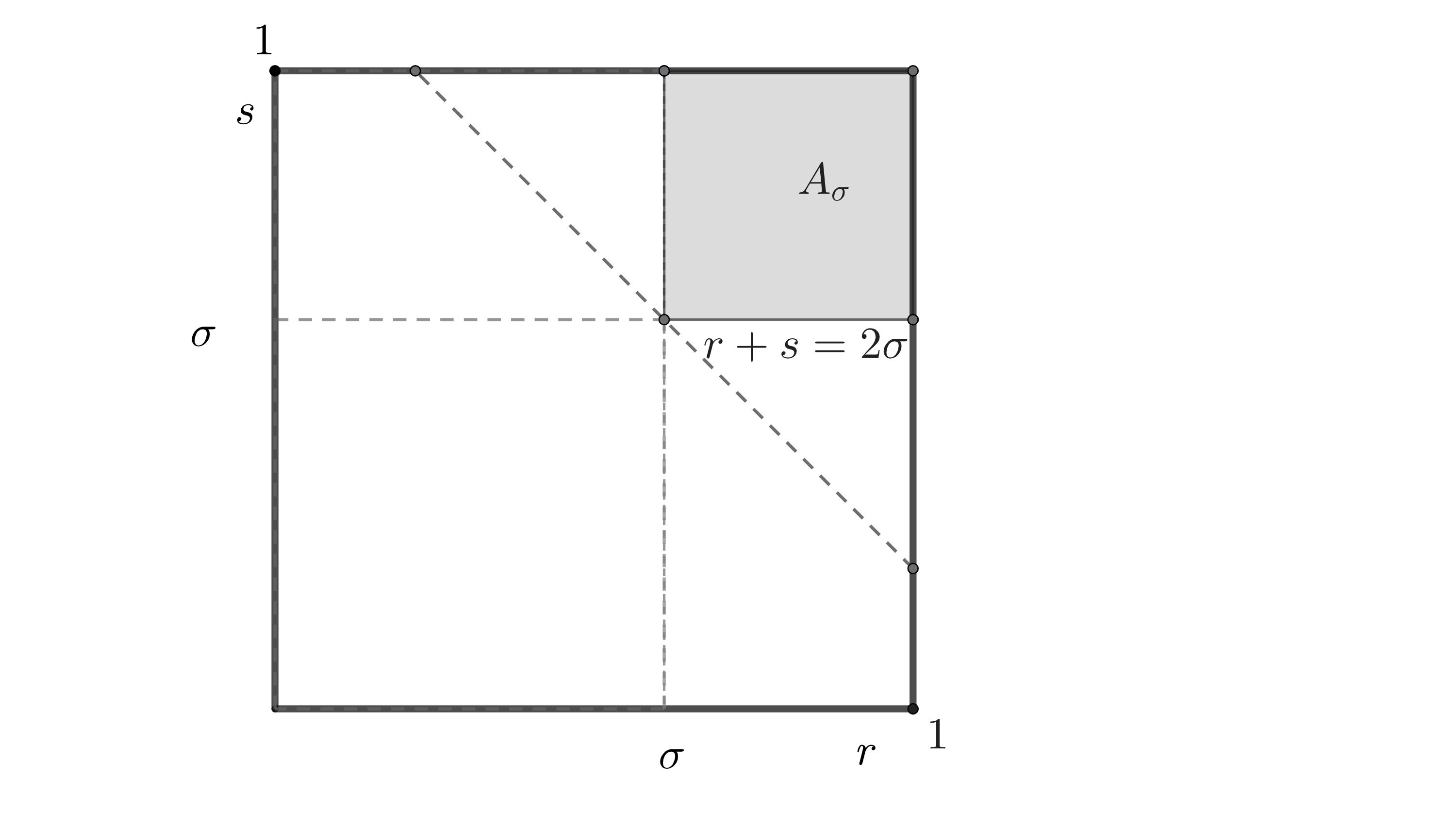}
	\end{center}
	\columnbreak
	\begin{center}
		\includegraphics[width=0.36\textwidth]{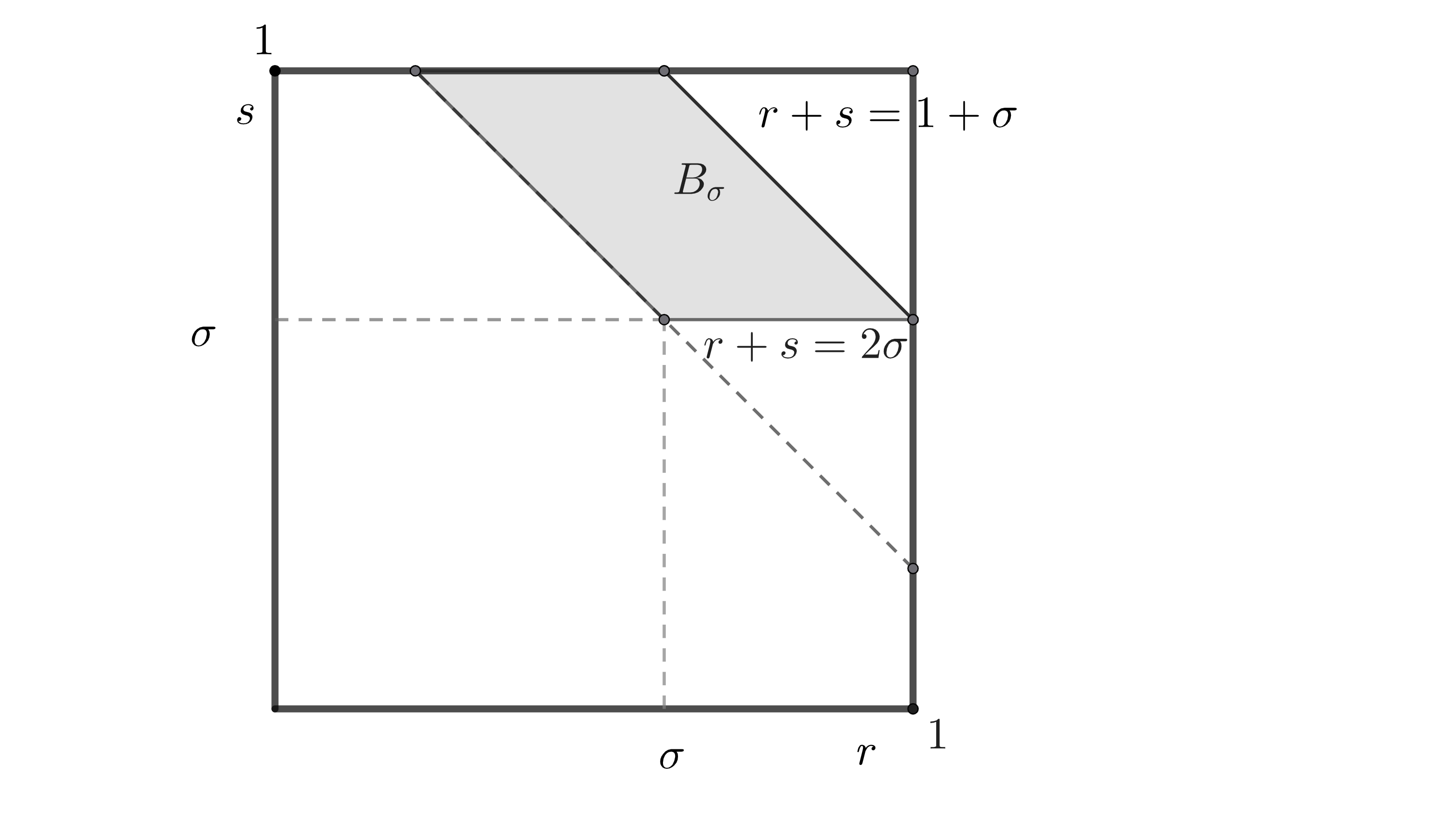}
	\end{center}
	\columnbreak
	\begin{center}
		\includegraphics[width=0.36\textwidth]{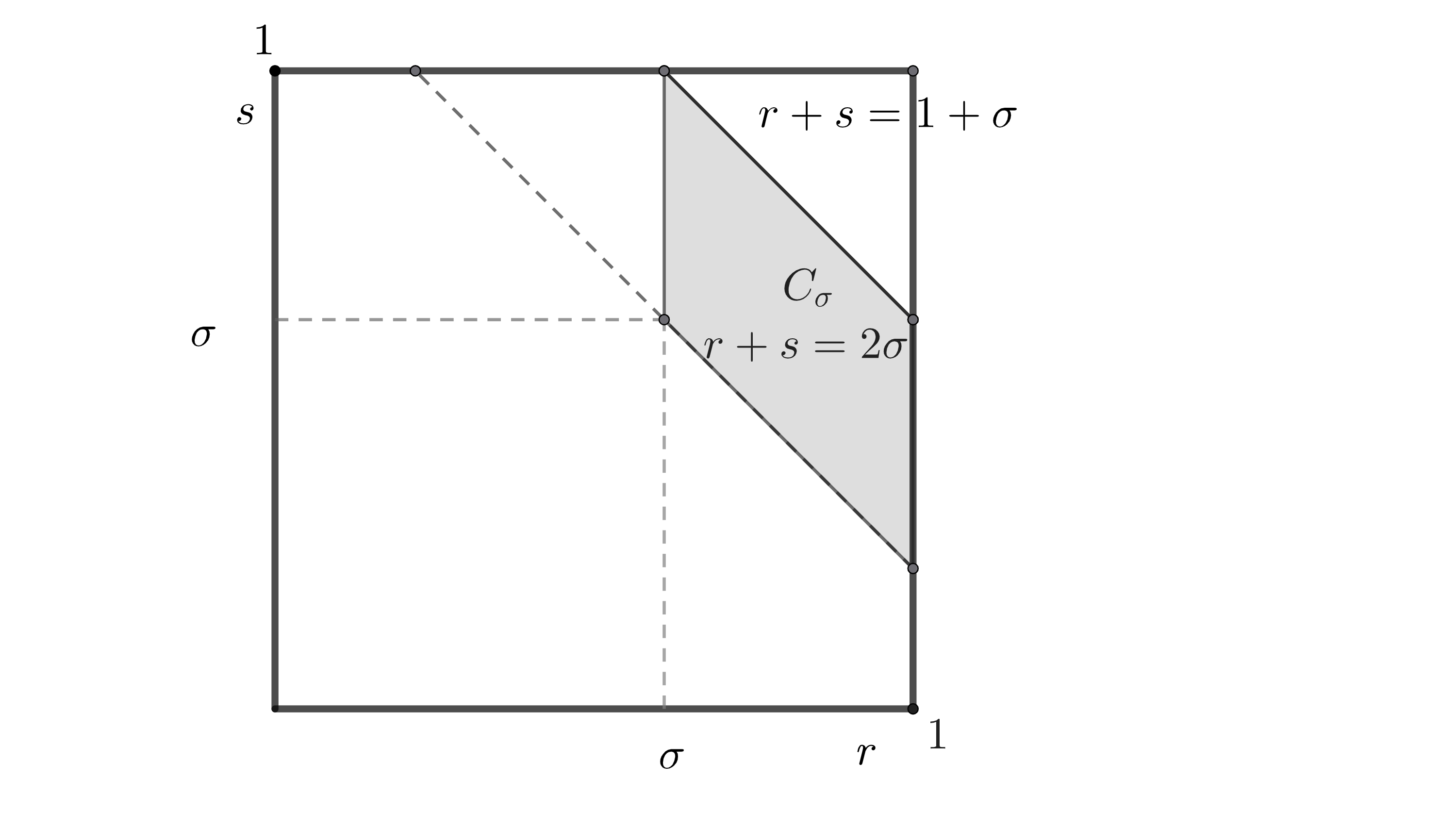}
	\end{center}
		\columnbreak
	\begin{center}
		\includegraphics[width=0.36\textwidth]{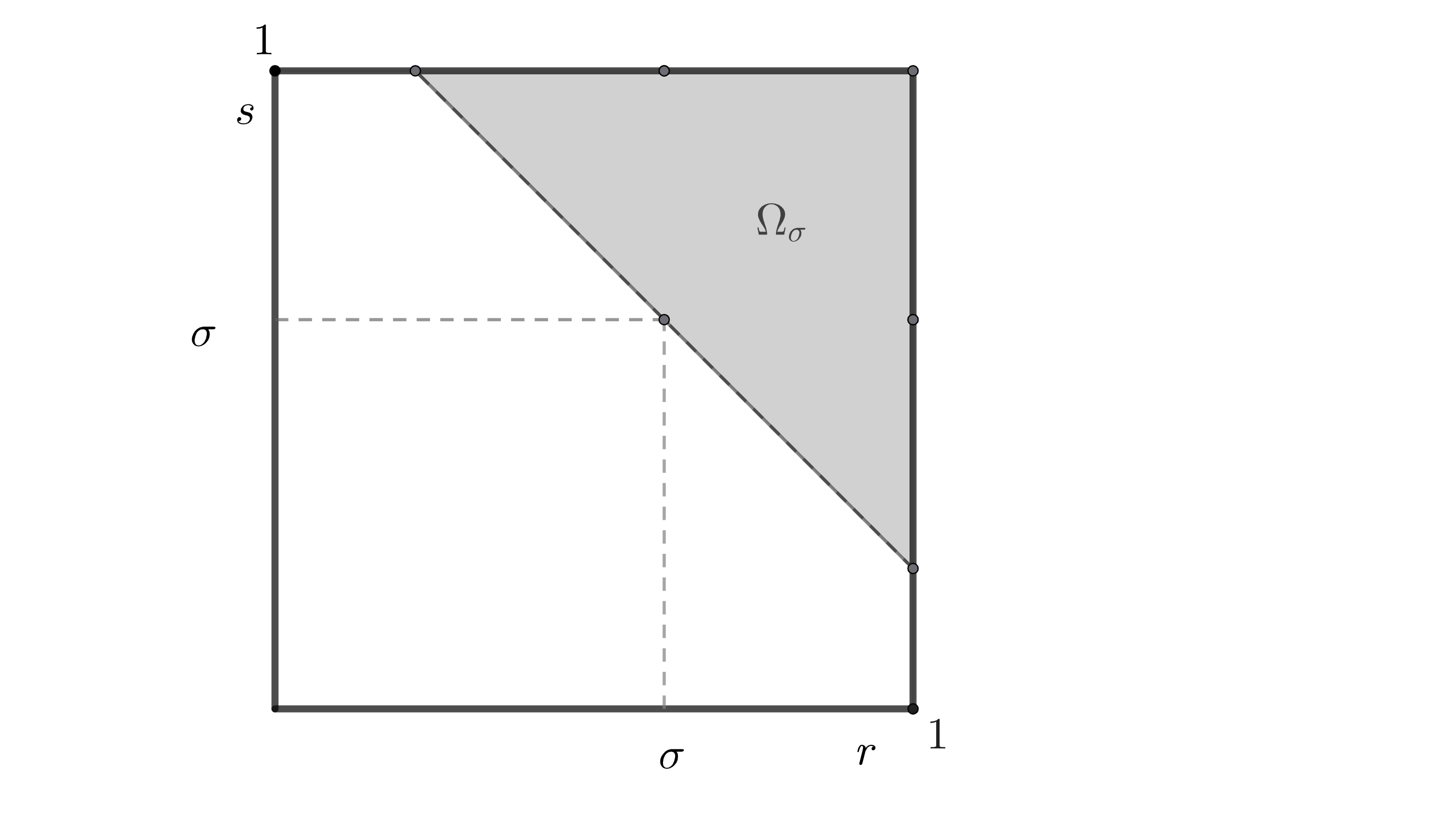}
	\end{center}
\end{multicols}
\begin{multicols}{4}
	\begin{center}
		\includegraphics[width=0.36\textwidth]{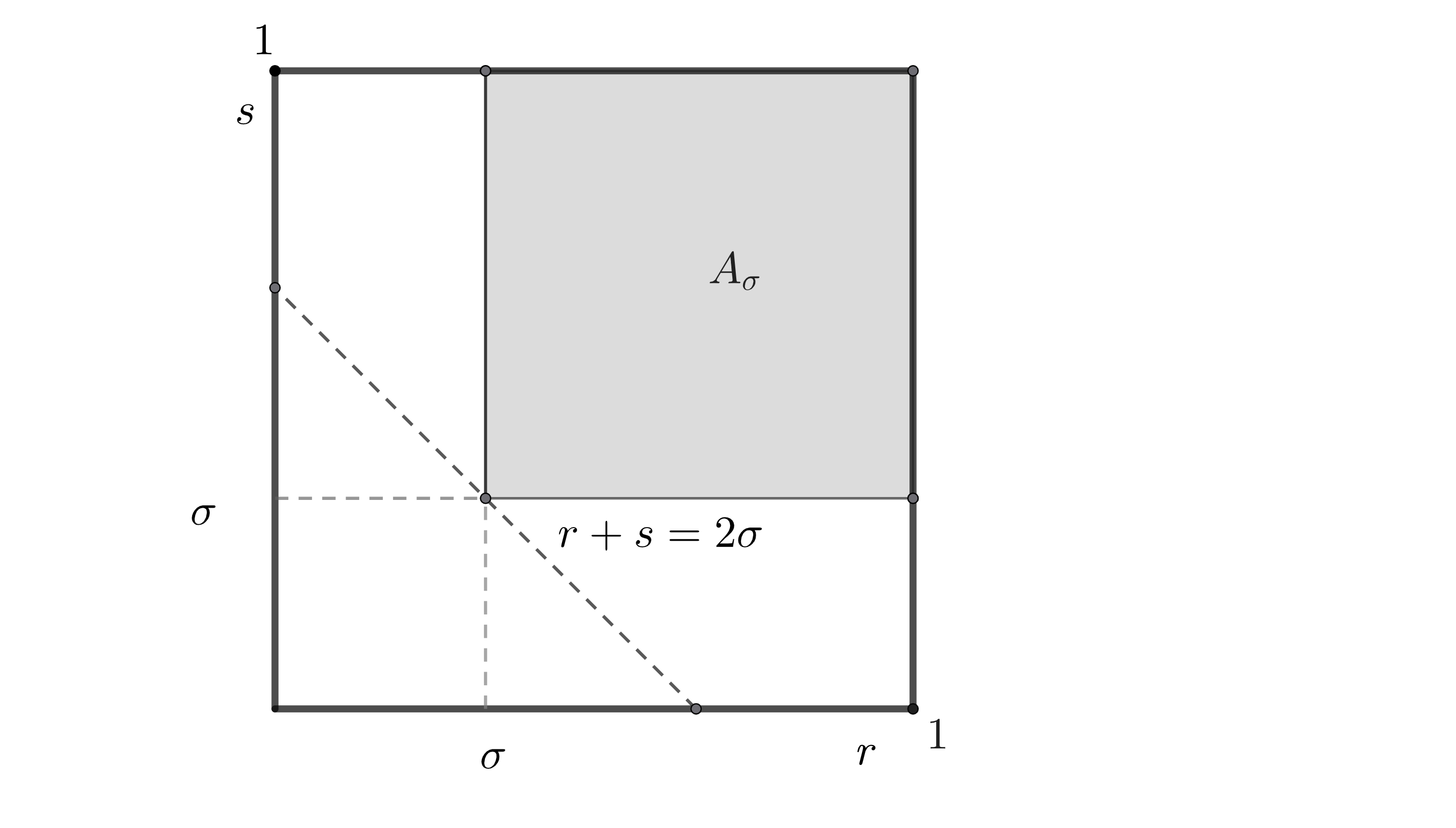}
	\end{center}
	\columnbreak
	\begin{center}
		\includegraphics[width=0.36\textwidth]{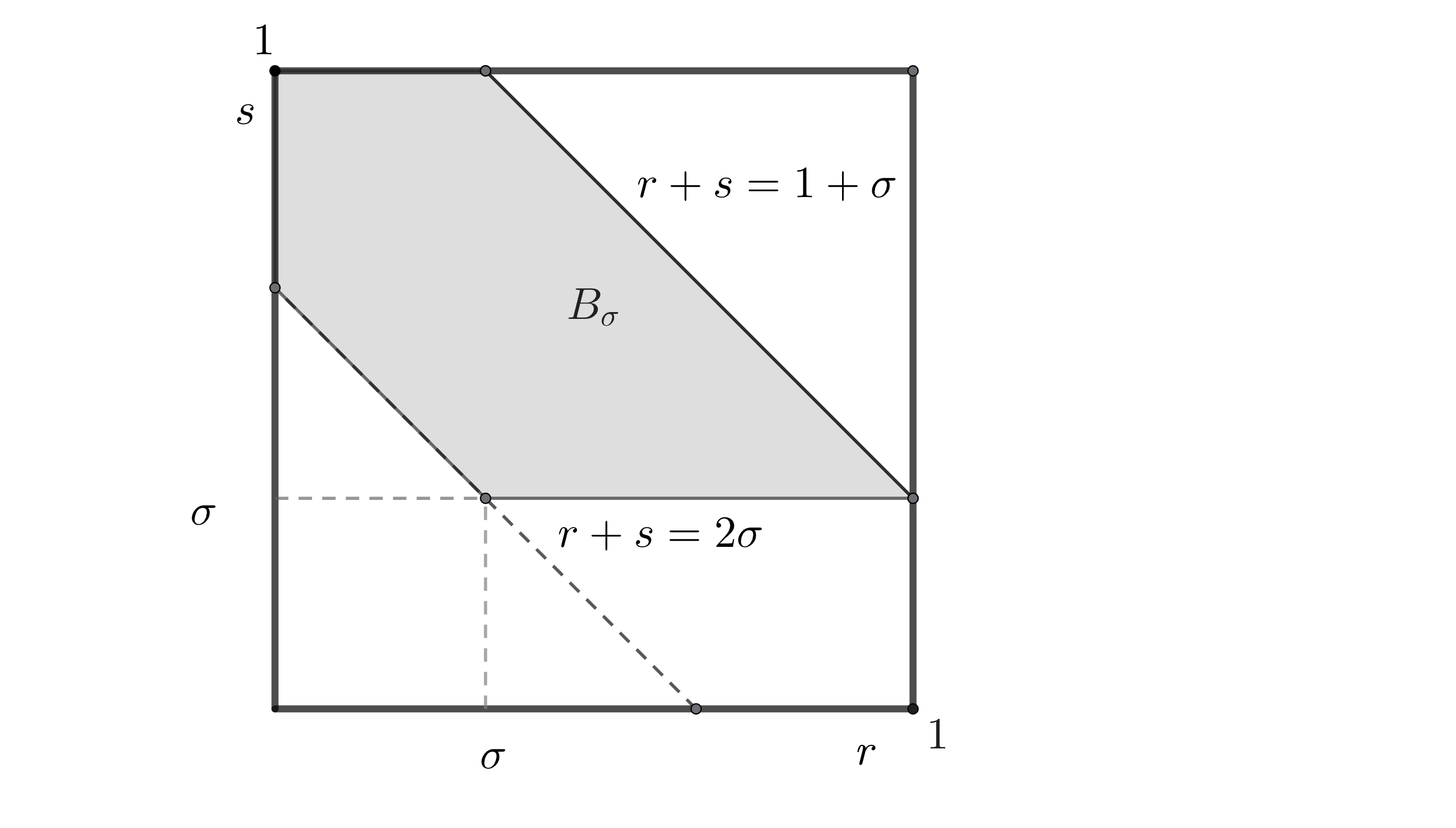}
	\end{center}
		\columnbreak
	\begin{center}
		\includegraphics[width=0.36\textwidth]{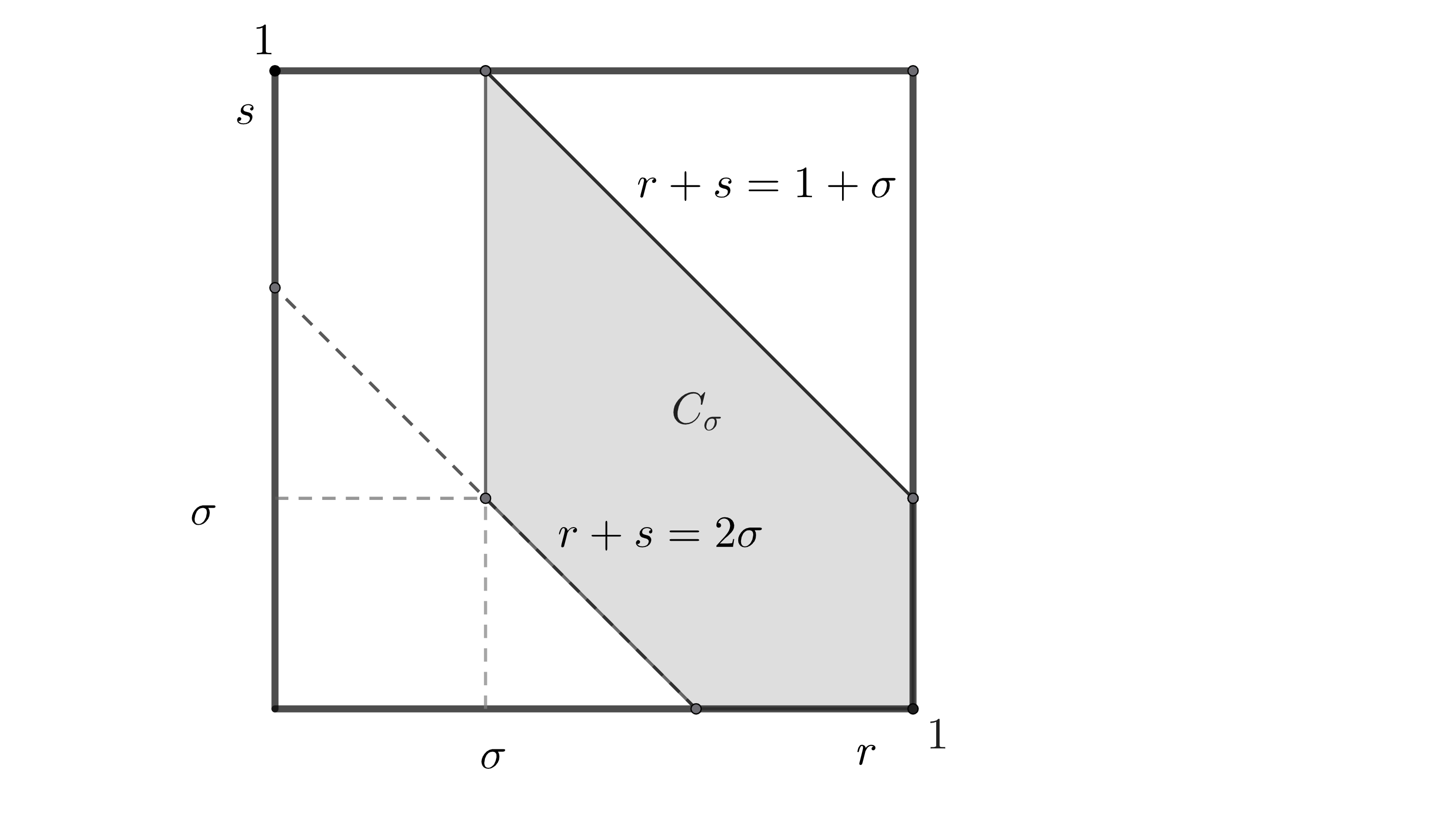}
	\end{center}
			\columnbreak
	\begin{center}
		\includegraphics[width=0.36\textwidth]{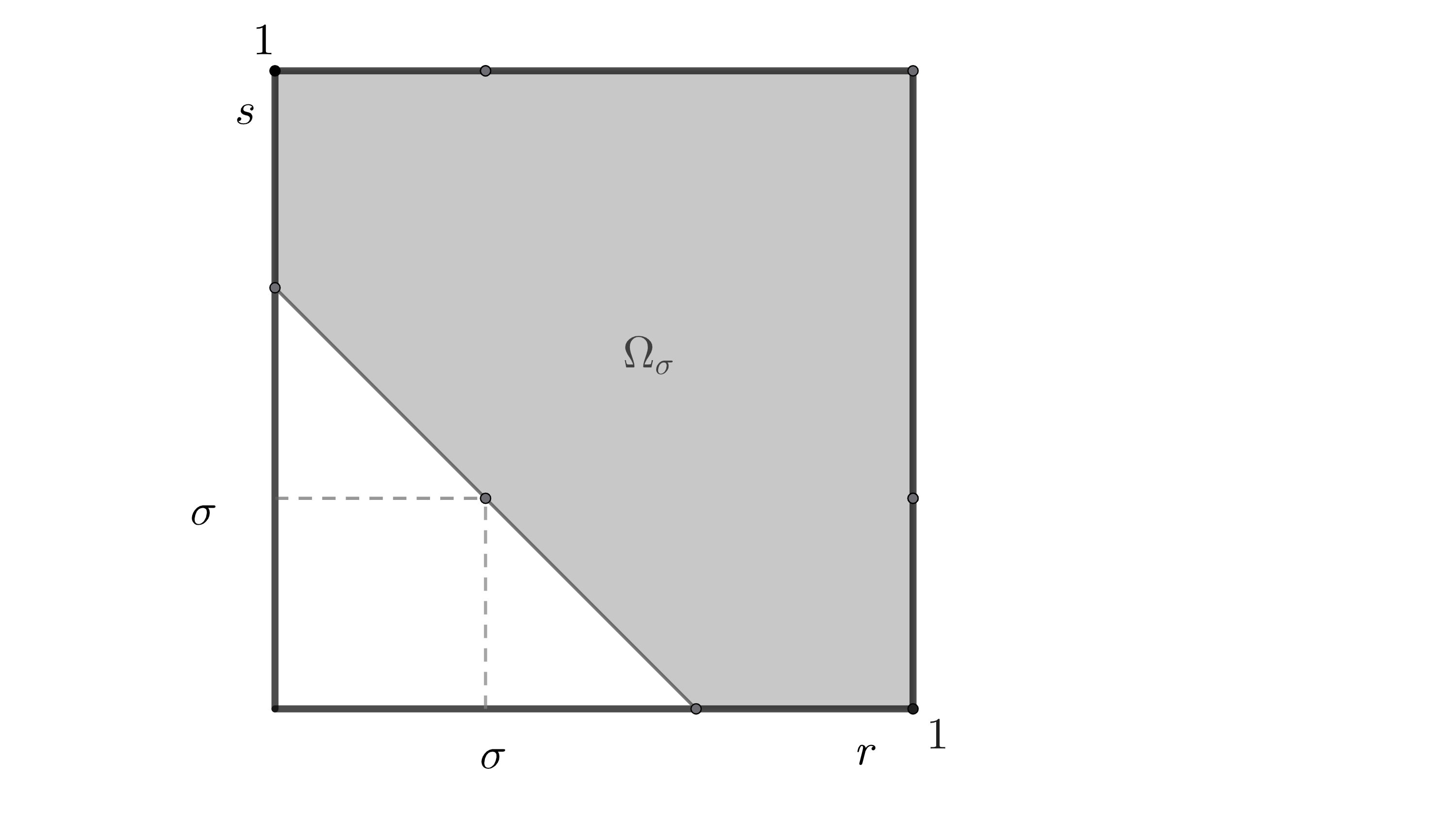}
	\end{center}
\end{multicols}
\caption{Pictures of $A_{\sigma}$, $B_{\sigma}$, $C_{\sigma}$ and $\Omega_\sigma$. The first row depicts the four regions when $\frac{1}{2}<\sigma<1$. The second row when $0<\sigma<\frac{1}{2}$. }\label{fig1}
\end{figure}
The next result provides the three basic estimates of the kernel of the operator $T^\gamma$.

\begin{proposition}\label{Tgamma<min Ietagamma}
Let $(X,d,\mu)$ be an $\eta$-Ahlfors regular space with $\eta>0$. Let $X^3,d^{(3)},\bigtriangleup_3$ and $\rho$ be defined as in the above section. Then,
\begin{enumerate}[label=\textit{(\thetheorem.\alph*)}, leftmargin=*, align=left, font=\color{black}]
\item \label{rhogamma<dgamma} the kernel $\rho^{-\gamma}(x,y,z)$ is bounded above by
\begin{equation*}
(2\kappa)^{\gamma}\min\left\{d(x,y)^{-\frac{\gamma}{2}}d(x,z)^{-\frac{\gamma}{2}}, d(x,y)^{-\frac{\gamma}{2}}d(y,z)^{-\frac{\gamma}{2}},d(x,z)^{-\frac{\gamma}{2}}d(z,y)^{-\frac{\gamma}{2}}\right\}
\end{equation*}
for every $(x,y,z)\in X^3$ and every $\gamma>0$;
\item \label{Tgamma<Ietagamma}for $f$ and $g$ two measurable nonnegative real functions and $0<\gamma<2\eta$, we have $T^{\gamma}(f,g)(x)$ is bounded above by
\begin{equation*}
(2\kappa)^\gamma \min\left\{\left(I_{\eta-\frac{\gamma}{2}}f(x)\right)\left(I_{\eta-\frac{\gamma}{2}}g(x)\right),  I_{\eta-\frac{\gamma}{2}}\left(fI_{\eta-\frac{\gamma}{2}}g\right)(x), I_{\eta-\frac{\gamma}{2}}\left(gI_{\eta-\frac{\gamma}{2}}f\right)(x)\right\}
\end{equation*}
for every $x\in X$, where $I_\alpha$ denotes the linear fractional integral operator of order $\alpha$ in the space $(X,d,\mu)$.
\end{enumerate}
\end{proposition}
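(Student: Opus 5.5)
The plan is to first prove the pointwise kernel bound \ref{rhogamma<dgamma}, and then integrate it against $f(y)g(z)$ to get \ref{Tgamma<Ietagamma}. Throughout I take $I_\alpha h(x)=\int_X d(x,y)^{\alpha-\eta}h(y)\,d\mu(y)$. With $\alpha=\eta-\frac{\gamma}{2}$ the kernel is then $d(x,y)^{-\frac{\gamma}{2}}$, and the condition $0<\gamma<2\eta$ is exactly $0<\alpha<\eta$.

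For \ref{rhogamma<dgamma}, fix $(x,y,z)$ and any $u\in X$, and set $M=\max\{d(x,u),d(y,u),d(z,u)\}$. The quasi-triangle inequality, as in the proof of Lemma~\ref{lemadeinclusiones}, gives
\begin{equation*}
d(x,y)\leq\kappa\left(d(x,u)+d(u,y)\right)\leq 2\kappa M,
\end{equation*}
and likewise $d(x,z)\leq 2\kappa M$ and $d(y,z)\leq 2\kappa M$. Taking the infimum over $u$ yields $\max\{d(x,y),d(x,z),d(y,z)\}\leq 2\kappa\rho(x,y,z)$. I will need the symmetry $d(y,z)=d(z,y)$; if $d$ is only quasi-symmetric this costs a constant. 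For any two of the three distances $d_1,d_2$ we then have
\begin{equation*}
\rho^{2}\geq (2\kappa)^{-2}d_1d_2, \quad\text{so}\quad \rho^{-\gamma}\leq(2\kappa)^{\gamma}d_1^{-\frac{\gamma}{2}}d_2^{-\frac{\gamma}{2}}.
\end{equation*}
Choosing the pairs $(d(x,y),d(x,z))$, $(d(x,y),d(y,z))$ and $(d(x,z),d(z,y))$ gives the three terms, and hence the minimum. Points where some distance vanishes are harmless: there the right-hand side is $+\infty$, and such sets have product measure zero anyway.

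For \ref{Tgamma<Ietagamma}, since $f,g\geq 0$, I can insert each of the three bounds into the definition of $T^{\gamma}(f,g)(x)$ and apply Tonelli.
\begin{itemize}
\item The first term makes the integrand factor:
\begin{equation*}
\iint d(x,y)^{-\frac{\gamma}{2}}f(y)\,d(x,z)^{-\frac{\gamma}{2}}g(z)\,d\mu(y)\,d\mu(z)=I_{\eta-\frac{\gamma}{2}}f(x)\,I_{\eta-\frac{\gamma}{2}}g(x).
\end{equation*}
\item For the second term, I integrate first in $z$, getting $\int d(y,z)^{-\frac{\gamma}{2}}g(z)\,d\mu(z)=I_{\eta-\frac{\gamma}{2}}g(y)$. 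The remaining integral is $\int d(x,y)^{-\frac{\gamma}{2}}f(y)\,I_{\eta-\frac{\gamma}{2}}g(y)\,d\mu(y)=I_{\eta-\frac{\gamma}{2}}\bigl(fI_{\eta-\frac{\gamma}{2}}g\bigr)(x)$.
\item The third term is the same computation with the roles of $(y,f)$ and $(z,g)$ exchanged, integrating first in $y$.
\end{itemize}
Since $T^{\gamma}(f,g)(x)$ is bounded by each of the three quantities, it is bounded by their minimum.

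There is no real obstacle here. The only points needing care are the following.
\begin{itemize}
\item The normalization of $I_\alpha$ must be fixed so that the constant is exactly $(2\kappa)^\gamma$.
\item The symmetry of $d$ must be justified, or else the constant adjusted.
\item Tonelli must be justified, which holds because all integrands are nonnegative and measurable.
\end{itemize}
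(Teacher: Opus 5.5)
Your proposal is correct and follows essentially the paper's proof. The paper likewise derives $d(x,y),d(x,z),d(y,z)\le 2\kappa\rho(x,y,z)$ from the quasi-triangle inequality, using an $\varepsilon$-almost minimizing $u_\varepsilon$ where you take the infimum over $u$ directly. It then combines two of these bounds to get the three kernel estimates, and inserts them into $T^{\gamma}(f,g)(x)$: it factors for the first term and integrates first in $z$ (resp.\ $y$) for the second (resp.\ third). Your normalization $I_\alpha h(x)=\int_X d(x,y)^{\alpha-\eta}h(y)\,d\mu(y)$ is the one the paper implicitly uses, so the constant $(2\kappa)^{\gamma}$ comes out exactly as stated.
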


\begin{proof}
Let us first check \ref{rhogamma<dgamma}. Given $(x,y,z)\in X^3$ and $\varepsilon>0$, there exists $u_\varepsilon \in X$ such that	
\begin{equation*}
\rho(x,y,z)+\varepsilon\geq d^{(3)}\left((x,y,z), (u_\varepsilon,u_\varepsilon,u_\varepsilon)\right)= \max \{d(x,u_\varepsilon),d(y,u_\varepsilon),d(z,u_\varepsilon)\}.
\end{equation*} 
Now from triangle inequality, $d(x,y)\leq \kappa\left(d(x,u_\varepsilon)+d(u_\varepsilon,y)\right)
\leq 	2\kappa\rho(x,y,z)+2\kappa\varepsilon$; $d(y,z)\leq \kappa\left(d(y,u_\varepsilon)+d(u_\varepsilon,z)\right)
\leq 	2\kappa\rho(x,y,z)+2\kappa\varepsilon$, and $	d(x,z)\leq \kappa\left(d(x,u_\varepsilon)+d(u_\varepsilon, z)\right)
\leq 	2\kappa\rho(x,y,z)+2\kappa\varepsilon$.
Hence 
\begin{equation*}
\rho^{-\frac{\gamma}{2}}(x,y,z)
\leq (2\kappa)^{-\frac{\gamma}{2}}\min\left\{d(x,y)^{-\frac{\gamma}{2}},d(x,z)^{-\frac{\gamma}{2}},d(y,z)^{-\frac{\gamma}{2}}\right\}.
\end{equation*}
So that the square of the left hand side of the above inequality is bounded by the product of any two of the three numbers on the right. In other words
\begin{equation*}
\rho^{-\gamma}(x,y,z)
\leq (2\kappa)^{\gamma}\min\left\{d(x,y)^{-\frac{\gamma}{2}}d(x,z)^{-\frac{\gamma}{2}}, d(x,y)^{-\frac{\gamma}{2}}d(y,z)^{-\frac{\gamma}{2}},d(x,z)^{-\frac{\gamma}{2}}d(z,y)^{-\frac{\gamma}{2}}\right\},
\end{equation*}
as desired.
	
Let us prove \ref{Tgamma<Ietagamma}. Given $f\geq0$ and $g\geq 0$, from \ref{rhogamma<dgamma} we have
\begin{align*}
T^{\gamma}(f,g)
&=\iint_{X\times X}\frac{f(y)g(z)}{\rho^{\gamma}(x,y,z)}d\mu(y)d\mu(z)\\
&\leq (2\kappa)^\gamma\left(\int_{y\in X}\frac{f(y)}{d^{\frac{\gamma}{2}}(x,y)}d\mu(y)\right)\left(\int_{z\in X}\frac{g(z)}{d^{\frac{\gamma}{2}}(x,z)}d\mu(z)\right)\\
&=(2\kappa)^\gamma\left(I_{\eta-\frac{\gamma}{2}}f\right)(x)\left(I_{\eta-\frac{\gamma}{2}}g\right)(x);
\end{align*}

\begin{align*}
T^{\gamma}(f,g)
&=\iint_{X\times X}\frac{f(y)g(z)}{\rho^{\gamma}(x,y,z)}d\mu(y)d\mu(z)\\
&\leq (2\kappa)^\gamma\int_{y\in X}\frac{f(y)}{d^{\frac{\gamma}{2}}(x,y)}\left(\int_{z\in X}\frac{g(z)}{d^{\frac{\gamma}{2}}(y,z)}d\mu(z)\right)d\mu(y)\\
&=(2\kappa)^\gamma I_{\eta-\frac{\gamma}{2}}\left(fI_{\eta-\frac{\gamma}{2}}g\right)(x);
\end{align*}
and
\begin{align*}
T^{\gamma}(f,g)
&=\iint_{X\times X}\frac{f(y)g(z)}{\rho^{\gamma}(x,y,z)}d\mu(y)d\mu(z)\\
&\leq (2\kappa)^\gamma \int_{z\in X}\frac{g(z)}{d^{\frac{\gamma}{2}}(x,z)}\left(\int_{y\in X}\frac{f(y)}{d^{\frac{\gamma}{2}}(z,y)}d\mu(y)\right)d\mu(z)\\
&=(2\kappa)^\gamma I_{\eta-\frac{\gamma}{2}}\left(gI_{\eta-\frac{\gamma}{2}}f\right)(x).
\end{align*}
\end{proof}

\begin{proposition}\label{|Ietagammafg|<c|f||g|}
Let $(X,d,\mu)$ be an $\eta$-Ahlfors regular space with $\eta>0$ and $\rho$ as before. Let $0<\gamma<2\eta$ be given. Set $\sigma=\frac{2\eta -\gamma}{2\eta}$ and $\Pi_\sigma=\{(r,s,t): r+s-t=2\sigma\}$.
Then, with the notation of Lemma~\ref{Omega=AuBuC} we have that
\begin{enumerate}[label=\textit{(\thetheorem.\alph*)}, leftmargin=*, align=left, font=\color{black}]
\item\label{desig 1} for every $(p_1,p_2;p_3)$ such that $\bigl(\frac{1}{p_1},\frac{1}{p_2},\frac{1}{p_3}\bigr) \in \Pi_{\sigma}$ and $\bigl(\frac{1}{p_1},\frac{1}{p_2}\bigr) \in A_{\sigma}$, there exists $C>0$ such that
\begin{equation*}
\left\|\left(I_{\eta-\frac{\gamma}{2}}f\right)\left(I_{\eta-\frac{\gamma}{2}}g\right)\right\|_{p_3}\leq C \|f\|_{p_1}\|g\|_{p_2}
\end{equation*}
for $f\in L^{p_1}(X,\mu)$ and $g\in L^{p_2}(X,\mu)$;
\item\label{desig 2 } for every $(p_1,p_2;p_3)$ such that $\bigl(\frac{1}{p_1},\frac{1}{p_2},\frac{1}{p_3}\bigr) \in \Pi_{\sigma}$ and $\bigl(\frac{1}{p_1},\frac{1}{p_2}\bigr) \in B_{\sigma}$, there exists $C>0$ such that
\begin{equation*}
\left\|I_{\eta-\frac{\gamma}{2}}\left(fI_{\eta-\frac{\gamma}{2}}g\right)\right\|_{p_3}\leq C \|f\|_{p_1}\|g\|_{p_2}
\end{equation*}
for $f\in L^{p_1}(X,\mu)$ and $g\in L^{p_2}(X,\mu)$;
\item \label{desig 3}
for every $(p_1,p_2;p_3)$ such that $\bigl(\frac{1}{p_1},\frac{1}{p_2},\frac{1}{p_3}\bigr) \in \Pi_{\sigma}$ and $\bigl(\frac{1}{p_1},\frac{1}{p_2}\bigr) \in C_{\sigma}$, there exists $C>0$ such that
\begin{equation*}
\left\|I_{\eta-\frac{\gamma}{2}}\left(gI_{\eta-\frac{\gamma}{2}}f\right)\right\|_{p_3}\leq C \|f\|_{p_1}\|g\|_{p_2}
\end{equation*}
for $f\in L^{p_1}(X,\mu)$ and $g\in L^{p_2}(X,\mu)$.
\end{enumerate}
\end{proposition}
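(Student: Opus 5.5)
Throughout, write $\alpha=\eta-\frac{\gamma}{2}$, so $\sigma=\frac{\alpha}{\eta}\in(0,1)$. The only analytic tool is the linear Hardy--Littlewood--Sobolev inequality on $\eta$-Ahlfors regular spaces, as in \cite{GCGa04}:
\begin{equation*}
\|I_\alpha h\|_{q}\leq C\|h\|_{p}\quad\text{whenever } 1<p<\frac{\eta}{\alpha}\text{ and }\frac1q=\frac1p-\sigma .
\end{equation*}
In terms of reciprocals this says: if $\frac1p\in(\sigma,1)$, then $I_\alpha$ maps $L^p$ into $L^q$ with $\frac1q=\frac1p-\sigma\in(0,1-\sigma)$. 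Each of the three estimates is then a combination of this inequality with Hölder's inequality. The regions $A_\sigma,B_\sigma,C_\sigma$ of Lemma~\ref{Omega=AuBuC} are exactly the ranges in which every exponent that appears is admissible.

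For \ref{desig 1}, take $\bigl(\frac1{p_1},\frac1{p_2}\bigr)\in A_\sigma=(\sigma,1)^2$.
\begin{enumerate}[label=(\roman*)]
\item Apply the HLS inequality to each factor separately: $\|I_\alpha f\|_{q_1}\leq C\|f\|_{p_1}$ with $\frac1{q_1}=\frac1{p_1}-\sigma>0$, and similarly $\|I_\alpha g\|_{q_2}\leq C\|g\|_{p_2}$ with $\frac1{q_2}=\frac1{p_2}-\sigma>0$.
\item Use Hölder's inequality with $\frac1{p_3}=\frac1{q_1}+\frac1{q_2}=\frac1{p_1}+\frac1{p_2}-2\sigma$. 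This is precisely the relation defining $\Pi_\sigma$.
\item Note that $p_3$ may be less than $1$. Hölder's inequality $\|FG\|_{p_3}\leq\|F\|_{q_1}\|G\|_{q_2}$ still holds for any positive exponents with $\frac1{p_3}=\frac1{q_1}+\frac1{q_2}$, so no restriction is needed.
\end{enumerate}

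For \ref{desig 2 }, take $\bigl(\frac1{p_1},\frac1{p_2}\bigr)\in B_\sigma$, so $\frac1{p_2}>\sigma$ and $2\sigma<\frac1{p_1}+\frac1{p_2}<1+\sigma$.
\begin{enumerate}[label=(\roman*)]
\item First apply HLS to $g$: $I_\alpha g\in L^{q_2}$ with $\frac1{q_2}=\frac1{p_2}-\sigma$.
\item By Hölder, $fI_\alpha g\in L^{s}$ with $\frac1s=\frac1{p_1}+\frac1{p_2}-\sigma$.
\item The hypothesis $\frac1{p_1}+\frac1{p_2}<1+\sigma$ gives $\frac1s<1$, so $s>1$. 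The hypothesis $\frac1{p_1}+\frac1{p_2}>2\sigma$ gives $\frac1s>\sigma$. Hence $\frac1s\in(\sigma,1)$ and HLS applies a second time.
\item The second application gives $I_\alpha(fI_\alpha g)\in L^{p_3}$ with $\frac1{p_3}=\frac1s-\sigma=\frac1{p_1}+\frac1{p_2}-2\sigma$, which is again the relation defining $\Pi_\sigma$. Here $p_3$ is automatically in $(1,\infty)$.
\end{enumerate}
Part \ref{desig 3} follows by the same argument with the roles of $f$ and $g$ exchanged, using the condition $\frac1{p_1}>\sigma$ that defines $C_\sigma$.

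The argument contains no genuinely hard step. The point needing most care is the bookkeeping of exponents: checking that each intermediate exponent lies strictly inside the open range required by HLS, and recording where Hölder is used with $p_3<1$. One should also confirm that the HLS inequality in \cite{GCGa04} is available for the normalization $I_\alpha h(x)=\int h(y)\,d(x,y)^{\alpha-\eta}\,d\mu(y)$ on quasi-metric Ahlfors regular spaces. If it is only stated for metric spaces, I would pass to an equivalent metric $d^\epsilon$ via Macías--Segovia, which changes the kernel only up to constants.
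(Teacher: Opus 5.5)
Your proposal is correct and follows the paper's argument. In each region you combine Hölder with two applications of the Hardy--Littlewood--Sobolev inequality, using the same intermediate exponents: $\frac{1}{p_i}-\sigma$ in $A_\sigma$, and in $B_\sigma$ your $\frac1s=\frac{1}{p_1}+\frac{1}{p_2}-\sigma$, which is the paper's $\frac{1}{q_1}=\frac{1}{p_3}+\sigma$. Your remarks that Hölder still applies when $p_3<1$ in part (a), and on using a Macías--Segovia equivalent metric for the quasi-metric case of HLS, are valid details that the paper leaves implicit.
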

\begin{proof}
Let us first prove \ref{desig 1}. Take	$\bigl(\frac{1}{p_1},\frac{1}{p_2},\frac{1}{p_3}\bigr)\in \Pi_\sigma$ with $\bigl(\frac{1}{p_1},\frac{1}{p_2}\bigr)\in A_{\sigma}$. So that
\begin{align}
	\frac{1}{p_1}+\frac{1}{p_2}-\frac{1}{p_3}&=2\sigma\label{hipot1}\\
	1>\frac{1}{p_1}&>\sigma\label{hipot2}\\
	1>\frac{1}{p_2}&>\sigma.\label{hipot3}
\end{align}
From the first inequality in \eqref{hipot2} we have that $p_1>1$, from the second, $\frac{1}{\pi_1}:=\frac{1}{p_1}-\sigma=\frac{1}{p_1}-\frac{\eta-\frac{\gamma}{2}}{\eta}>0$. From \eqref{hipot3}, the same argument shows that $\frac{1}{\pi_2}:=\frac{1}{p_2}-\sigma=\frac{1}{p_2}-\frac{\eta-\frac{\gamma}{2}}{\eta}>0$. Moreover, from \eqref{hipot1} we have 
\begin{equation*}
	\frac{1}{\pi_1}+\frac{1}{\pi_2}=\frac{1}{p_1}-\sigma+\frac{1}{p_2}-\sigma=\frac{1}{p_1}+\frac{1}{p_2}-2\sigma=\frac{1}{p_3}.
\end{equation*}
Hence, we can apply first H\"{o}lder inequality and then Hardy-Littlewood-Sobolev twice to obtain
\begin{align*}
\left\|\left(I_{\eta-\frac{\gamma}{2}}f\right)\left(I_{\eta-\frac{\gamma}{2}}g\right)\right\|_{p_3}&\leq\left\|I_{\eta-\frac{\gamma}{2}}f\right\|_{\pi_1}\left\|I_{\eta-\frac{\gamma}{2}}g\right\|_{\pi_2}\\
&\leq C\|f\|_{p_1}\|g\|_{p_2},
\end{align*}
as desired.

In order to prove \ref{desig 2 },  take $\bigl(\frac{1}{p_1},\frac{1}{p_2},\frac{1}{p_3}\bigr)\in \Pi_\sigma$ with  $\bigl(\frac{1}{p_1},\frac{1}{p_2}\bigr)\in B_\sigma$. Recall that the conditions are the following
\begin{align}
	\frac{1}{p_1}+\frac{1}{p_2}-\frac{1}{p_3}&=2\sigma\label{cond1}\\
	\frac{1}{p_1}+\frac{1}{p_2}&>2\sigma\label{cond2}\\
	\frac{1}{p_1}+\frac{1}{p_2}&<1+\sigma\label{cond3}\\
	\frac{1}{p_2}&>\sigma\label{cond4}
\end{align}
Let us proceed to introduce two new Lebesgue exponents given in terms of $\bigl(\frac{1}{p_1},\frac{1}{p_2},\frac{1}{p_3}\bigr)$. Set $\frac{1}{q_1}=\frac{1}{p_3}+\sigma$ and $\frac{1}{q_2}=\frac{1}{p_2}-\sigma$. Notice first that $1<q_1<\infty$. In fact, it is clear that $q_1<\infty$. Let us show that $q_1>1$. From \eqref{cond1} and \eqref{cond3} we see that $\frac{1}{q_1}=\frac{1}{p_3}+\sigma=\frac{1}{p_1}+\frac{1}{p_2}-2\sigma+\sigma=\frac{1}{p_1}+\frac{1}{p_2}-\sigma<1$. Let us check that $q_2>1$. From \eqref{cond4} we see that  $\frac{1}{q_2}=\frac{1}{p_2}-\sigma>0$. Now $\frac{1}{p_2}<\frac{1}{p_1}+\frac{1}{p_2}<1+\sigma$, from \eqref{cond3}, hence $0<\frac{1}{q_2}=\frac{1}{p_2}-\sigma<1$. Now we observe that $p_1$, $q_1$, and $q_2$ are related by the	H\"{o}lder identity through $\frac{1}{q_1}=\frac{1}{p_1}+\frac{1}{q_2}$. In fact $\frac{1}{q_1}-\frac{1}{q_2}=\frac{1}{p_3}+\sigma-\frac{1}{p_2}+\sigma=\frac{1}{p_3}-\frac{1}{p_2}+2\sigma=\frac{1}{p_1}$, form \eqref{cond1}. Now $p_3$ and $q_1$, on one hand, and $q_2$ and $p_2$, on the other, are related by the Hardy-Littlewood-Sobolev formulae for the exponents in the $\eta$ dimensional space $(X,d,\mu)$. To wit
\begin{equation*}
	\frac{1}{q_1}-\frac{\eta-\frac{\gamma}{2}}{\eta}=\frac{1}{q_1}-\sigma=\frac{1}{p_3},
\end{equation*}
and
\begin{equation*}
		\frac{1}{p_2}-\frac{\eta-\frac{\gamma}{2}}{\eta}=\frac{1}{p_2}-\sigma=\frac{1}{q_2}.
\end{equation*}
Hence the following estimates are obtained by applying twice the Hardy-Littlewood-Sobolev theorem and once H\"{o}lder inequality, with constant $C$ that may change from to line,
\begin{align*}
\left\|I_{\eta-\frac{\gamma}{2}}\left(fI_{\eta-\frac{\gamma}{2}}g\right)\right\|_{p_3}&\leq C \|fI_{\eta-\frac{\gamma}{2}}g\|_{q_1}\\
&\leq C\|f\|_{p_1}\|I_{\eta-\frac{\gamma}{2}}g\|_{q_2}\\
&\leq C\|f\|_{p_1}\|g\|_{p_2},
\end{align*}
as desired.

In order to prove \ref{desig 3} we only have to observe that when $\bigl(\frac{1}{p_1},\frac{1}{p_2},\frac{1}{p_3}\bigr)\in \Pi_{\sigma}$ and $\bigl(\frac{1}{p_1},\frac{1}{p_2}\bigr)\in C_\sigma$ condition \eqref{cond1}, \eqref{cond2} and \eqref{cond3} are symmetric with respect to the variables $r=\frac{1}{p_1}$ and $s=\frac{1}{p_2}$. For \eqref{cond4}, notice that now the condition becomes $\frac{1}{p_1}>\sigma$. Hence the result follows as in the case \ref{desig 2 }.
\end{proof}

Now the proof of Theorem~\ref{|T|p3leqC|f|p1|g|p2} follows readily from Lemma~\ref{Omega=AuBuC}, Proposition~\ref{Tgamma<min Ietagamma}  and Proposition~\ref{|Ietagammafg|<c|f||g|}.

	
%
\subsection*{Acknowledgements}
This work was supported in part by CONICET in Argentina, through grant PIET-R-2025 \#29820250100076CO.


\providecommand{\bysame}{\leavevmode\hbox to3em{\hrulefill}\thinspace}
\providecommand{\MR}{\relax\ifhmode\unskip\space\fi MR }
\providecommand{\MRhref}[2]{%
	\href{http://www.ams.org/mathscinet-getitem?mr=#1}{#2}
}
\providecommand{\href}[2]{#2}


\bigskip

\medskip

\noindent{\footnotesize
	\noindent\textit{Affiliation.\,}
	\textsc{Instituto de Matem\'{a}tica Aplicada del Litoral ``Dra. Eleonor Harboure'', UNL, CONICET.}

	\smallskip
	\noindent\textit{Address.\,}\textmd{IMAL, Streets F.~Leloir and A.P.~Calder\'on, CCT CONICET Santa Fe, Predio ``Alberto Ca\-ssa\-no'', Colectora Ruta Nac.~168 km~0, Paraje El Pozo, S3007ABA Santa Fe, Argentina.}

	\smallskip
	\noindent \textit{E-mail address.\, }\verb|haimar@santafe-conicet.gov.ar|; \\
	\hspace*{2.5cm} \verb|ivanagomez@santafe-conicet.gov.ar|;\\ 
	\hspace*{2.5cm} \verb|joaquintoledo@santafe-conicet.gov.ar| 
}

\end{document}